\newtheorem{thrm}{Theorem}[section]
\newtheorem{lem}[thrm]{Lemma}
\newtheorem{prop}[thrm]{Proposition}
\theoremstyle{definition}
\newtheorem{definition}[thrm]{Definition}
\newtheorem{remark}[thrm]{Remark}
\newtheorem{example}[thrm]{Example}
\numberwithin{equation}{section}
\def\sqr#1#2{{\,\vcenter{\vbox{\hrule height.#2pt\hbox{\vrule width.#2pt
height#1pt \kern#1pt\vrule width.#2pt}\hrule height.#2pt}}\,}}
\author[C-H. Chu]{Cho-Ho Chu}
\address{
School of Mathematical Sciences,
Queen Mary, University of London,
London E1 4NS, UK}
\email{c.chu@qmul.ac.uk}
\author[L. Li]{Lei Li}
\address{School of Mathematical Sciences and LPMC,
  Nankai University, Tianjin 300071,
 China}
\email{leilee@nankai.edu.cn}
\thanks{The second author is supported by NSF(China) research grants No.11301285 and No.11371201}
\keywords{Separably injective Banach space. C*-algebra. Substonean space. F-space.
} \subjclass{Primary 46B20, 54G05, 17C65}
\begin{document}

\title[Separably injective C*-algebras]
 {Separably injective C*-algebras}
\begin{abstract}
We show  that a C*-algebra is a $1$-separably
injective Banach space if, and only if, it is linearly isometric to the Banach
space $C_0(\Omega)$ of complex continuous functions vanishing at
infinity on a substonean locally compact Hausdorff space $\Omega$.
\end{abstract}

\maketitle

\section{Introduction}

\label{sect1}
It is well-known that there are few examples of $1$-injective Banach
spaces. These are Banach spaces $V$ for which every continuous
linear map  $T: Y \rightarrow V$ on a Banach space $Y$ admits a
norm preserving extension to a super space $Z \supset Y$,
equivalently, contractive linear maps $T: Y \rightarrow V$ extend
to contractive ones on $Z$. Indeed, a Banach space $V$ is
$1$-injective if and only if it is linearly isometric to the
continuous function space $C(\Omega)$ on some stonean space
$\Omega$ \cite{H58,kelley}. If the $1$-injectivity condition is
relaxed to requiring that each continuous linear map $T: Y
\rightarrow V$ extends to a continuous one on $Z\supset Y$, then
it is unclear what constitutes the larger class of Banach spaces, called
{\it $\lambda$-injective} or {\it $\mathcal{P}_\lambda$ spaces},
satisfying this condition. However, one can consider the class of
{\it $\lambda$-separably injective} Banach spaces to which only
continuous linear maps on {\it separable} spaces are extendable to
{\it separable} super spaces. Of particular interest is the
subclass of {\it $1$-separably injective}
Banach spaces to which
{\it contractive} linear maps on separable spaces admit {\it contractive}
extension on separable super spaces. While $c_0$ is the only
$\lambda$-separably injective space among infinite dimensional
{\it separable} Banach spaces \cite{z}, it has been shown recently
in \cite{AC13} that among {\it nonseparable} real Banach spaces, there
are indeed many interesting examples of $\lambda$-separably
injective spaces. In particular, the Banach space $C(\Omega, \mathbb{R})$ of real continuous
functions on a compact Hausdorff space $\Omega$ is $1$-separably injective if, and only if,
$\Omega$ is an F-space. It is natural to ask if this result also holds for the space
$C_0(S)$ of continuous functions vanishing at infinity on a locally compact space $S$. This case has not been
discussed in \cite{AC13} and in fact, the example of $c_0$ provides a negative answer since
$\mathbb{N}$ is an F-space, but $c_0$ is not $1$-separably injective although it is $2$-spearably injective.

In this paper, we give a complete answer to the above question and prove, more generally, that
a C*-algebra is $1$-separably injective if, and only if, it is linearly isometric to the Banach
space $C_0(S)$ of complex continuous functions vanishing at infinity on a {\it substonean} locally compact
Hausdorff space $S$. Particularly, abelian
monotone sequentially complete C*-algebras are $1$-separably injective.
This example may be of interest as the class of monotone complete C*-algebras is closely related to generic dynamics
\cite{sw}.

\section{Separably injective Banach spaces}

The concept
of a separably injective {\it real} Banach space, considered in
\cite{AC13}, can be extended naturally to that for a complex
Banach space.

\begin{definition}
A  complex (resp.\,real) Banach space $V$ is said to be
\textit{$1$-separably injective} if for every complex (resp.\,real) {\it
separable} Banach space $Z$ and every closed subspace $Y\subset
Z$, every bounded linear operator $T: Y\to V$ extends to a bounded
linear operator $\widetilde{T}: Z\to V$ with $\|\widetilde{T}\|=
\|T\|$.
\end{definition}

Given a locally compact Hausdoff space $\Omega$, we will denote by
$C_0(\Omega)$ the abelian C*-algebra of complex continuous functions on
$\Omega$ vanishing at infinity. If $\Omega$ is compact, then we
omit the subscript $0$ and denote by $C(\Omega,\mathbb{R})$ the
Banach space of real continuous functions on $\Omega$.

\begin{definition}
Let $\Omega$ be a  locally compact Hausdorff space. It is called an {\it F-space}
if for each real continuous function $f$ on $\Omega$, there is a real continuous
function $k$ on $\Omega$ such that $f = k|f|$ (cf.\,\cite[14.25]{GJ}). Following
\cite{GP}, we call $\Omega$   \textit{substonean} if any two
disjoint open $\sigma$-compact subsets of $\Omega$ have disjoint
compact closures.
\end{definition}

The compact substonean spaces are exactly the compact \textit{F-spaces}.
 However, infinite discrete spaces are
F-spaces without being substonean. We refer to \cite[Example
5]{HW89} for an example of a substonean space which is not an
F-space.

\begin{example}\label{s} Let $\Omega$ be a compact Hausdorff space.
Using the results in \cite{h,L}, it has been shown in
\cite[Proposition 4.2]{AC13} that the real continuous function
space $C(\Omega,\mathbb{R})$ is $1$-separably injective if, and only if,
$\Omega$ is an F-space. This result remains true if we replace
$C(\Omega,\mathbb{R})$ by the complex continuous function space
$C(\Omega)$. Indeed, if $C(\Omega,\mathbb{R})$ is $1$-separably
injective and given a contractive complex linear operator $T: Y
\rightarrow C(\Omega)$, where $Y$ is a closed subspace of a
separable complex Banach space $Z$, the real part $ {\rm Re}\, T :
 y\in Y_r \mapsto {\rm Re}\, T(y)\in C(\Omega,\mathbb{R})$ extends to a real linear
contraction $T_r: Z_r \rightarrow C(\Omega, \mathbb{R})$ which, as
in the proof of \cite[Theorem 2]{H58}, gives a complex linear
contraction
$$z\in Z \mapsto T_r(z) - i T_r(iz) \in C(\Omega)$$
extending $T$ since $T(y) = {\rm Re}\, T(y)-i {\rm Re}\, T(i y)$
for $y\in Y$. Hence $C(\Omega)$ is $1$-separably injective. Conversely, if $C(\Omega)$
is $1$-separably injective, it will follow from
Theorem \ref{111} that $C(\Omega, \mathbb{R})$ is $1$-separably injective.

However, as noted earlier, the above result is not valid for the space $C_0(S)$ of
continuous functions vanishing at infinity on a locally compact
Hausdorff space $S$.
Separable injectivity of $C_0(S)$ has not been considered in \cite{AC13}.
A topological
criterion for $1$-separable injectivity of $C_0(S)$ follows from  Theorem \ref{111}.
\end{example}

Let $V$ be a complex $1$-separably injective Banach space and let $T:Y
\rightarrow V$ be a bounded linear operator on a closed subspace
$Y$ of a complex Banach space $X$.  The arguments in the proof of
\cite[Proposition 3.5 (a)]{AC13} for {\it real} $1$-separably
injective spaces can be extended to the complex case and one can
show  that $T$ has a norm preserving extension $\widetilde T : X
\rightarrow V^{**}$. A further application of the arguments in the
proof of \cite[Theorem 2.1, (9)$\Rightarrow$(1)]{L64}, which are
also valid for complex spaces, gives the following result. The result for real
$1$-separably injective spaces has been shown in \cite{AC13}.

\begin{lem}\label{lem}
Let $V$ be a $1$-separably injective complex Banach space. Then the bidual $V^{**}$
is $1$-injective.
\end{lem}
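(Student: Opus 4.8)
The plan is to prove that $V^{**}$ is $1$-injective by verifying the well-known characterization that a Banach space is $1$-injective precisely when it has the binary intersection property: every collection of mutually intersecting closed balls has nonempty common intersection. Equivalently, by the Nachbin–Goodner–Kelley theorem, I would aim to show $V^{**}$ is linearly isometric to a $C(\Omega)$ on a stonean (extremally disconnected compact Hausdorff) space. Since the excerpt already grants me, via the extension arguments adapted from \cite[Proposition 3.5(a)]{AC13} and \cite[Theorem 2.1]{L64}, that every contractive operator $T\colon Y\to V$ on a closed subspace of an arbitrary complex Banach space $X$ extends to a contraction $\widetilde T\colon X\to V^{**}$, the natural strategy is to bootstrap this \emph{separable} domain extension property of $V$ into a \emph{full} injectivity statement for $V^{**}$.

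\smallskip

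First I would recall the classical fact (Lindenstrauss) that $1$-injectivity is equivalent to the property that contractive operators into the space extend from \emph{any} subspace of \emph{any} overspace. So the goal reduces to: given a closed subspace $Y$ of an arbitrary Banach space $Z$ and a contraction $T\colon Y\to V^{**}$, produce a contractive extension to $Z$. Second, I would exploit the separable structure by a local/compactness argument: the key observation is that $1$-separable injectivity controls the behaviour of $V$ on all separable pieces simultaneously, and the passage to the bidual $V^{**}$ converts the "separable super space" restriction into an unrestricted one. Concretely, the arguments referenced from \cite{L64} are designed exactly to upgrade a separable-domain extension property together with the bidual to full injectivity; the mechanism is that $V^{**}$ is complemented by a norm-one projection in any space containing it, which one checks by reducing to separable subspaces where the separable injectivity of $V$ (and weak-* density of $V$ in $V^{**}$) applies.

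\smallskip

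The main obstacle, and the step I would dwell on, is the transition from extensions landing in $V^{**}$ on separable domains to a genuine norm-one projection of an arbitrary overspace onto $V^{**}$. I expect the crux to be a weak-* compactness and ultrafilter (or directed-limit) argument: one writes an arbitrary separable subspace-pair, obtains the $V^{**}$-valued extension, and then must patch these extensions coherently across the directed family of all separable subspaces of $Z$ containing $Y$. Because the target is a dual space, the closed unit ball of $V^{**}$ is weak-* compact, so one can take a weak-* limit (along an ultrafilter on the directed set) of the separably-obtained extensions to define the extension on all of $Z$; the contractivity is preserved under weak-* limits, and agreement on $Y$ survives because each extension already agrees there. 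Verifying that this limiting operator is well-defined, linear, and still contractive is where the real work lies, and it is precisely the content abstracted in \cite[Theorem 2.1, (9)$\Rightarrow$(1)]{L64}.

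\smallskip

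Finally, having established full $1$-injectivity of $V^{**}$ by this route, I would note that no separate identification with a $C(\Omega)$ is needed for this lemma itself—$1$-injectivity is the asserted conclusion—though such an identification follows a posteriori from Goodner–Kelley and will presumably be used downstream toward Theorem \ref{111}. Throughout, the only genuinely complex-analytic input is the real-to-complex reduction already illustrated in Example \ref{s}, so the real-case proof of \cite{AC13} transfers with the complexification device $z\mapsto T_r(z)-iT_r(iz)$ handling the scalar field, and the remainder is the functional-analytic limiting argument sketched above.
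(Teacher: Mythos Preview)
Your proposal is correct and follows the same two-step route as the paper, which does not give a self-contained proof but simply notes (in the paragraph preceding the lemma) that the arguments of \cite[Proposition~3.5(a)]{AC13} produce the norm-preserving $V^{**}$-valued extension property for arbitrary domains, and that \cite[Theorem~2.1, (9)$\Rightarrow$(1)]{L64} then upgrades this to $1$-injectivity of $V^{**}$, both arguments carrying over verbatim to the complex case. Your speculative description of the mechanism slightly blurs the division of labor between the two references---the passage from separable to arbitrary domains is already in \cite{AC13}, while \cite{L64} handles the passage from ``maps into $V$ extend into $V^{**}$'' to ``$V^{**}$ is $1$-injective''---but since you ultimately rest on the same two citations, the approaches coincide.
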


\section{Separably injective C*-algebras}

We characterize $1$-separably injective C*-algebras in this section. We begin with a simple
lemma.

\begin{lem} \label{le}
A $1$-separably injective C*-algebra is abelian.
\end{lem}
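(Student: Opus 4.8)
The plan is to pass to the bidual and exploit Lemma~\ref{lem}. Let $A$ be a $1$-separably injective C*-algebra. By Lemma~\ref{lem}, its bidual $A^{**}$ is a $1$-injective complex Banach space, so by the Hasumi--Kelley theorem (see \cite{H58,kelley}) there is a surjective linear isometry $T\colon A^{**}\to C(\Omega)$ for some (stonean) compact Hausdorff space $\Omega$. On the other hand, $A^{**}$ carries its canonical structure as the enveloping von Neumann algebra of $A$; in particular it is a \emph{unital} C*-algebra. Thus $T$ is a surjective linear isometry between two unital C*-algebras, and I would feed this into the structure theory of such isometries. The conclusion I aim for is that $A^{**}$ is abelian; since the canonical embedding $A\hookrightarrow A^{**}$ is an isometric $*$-isomorphism onto a C*-subalgebra, abelianness then descends to $A$.

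To see that $A^{**}$ is abelian, I would invoke Kadison's classical description of surjective linear isometries between C*-algebras: $T$ factors as $x\mapsto u\,J(x)$, where $u=T(1)$ is a unitary of $C(\Omega)$ and $J\colon A^{**}\to C(\Omega)$ is a Jordan $*$-isomorphism. A Jordan homomorphism preserves squares, $J(x^2)=J(x)^2$ for every $x$, because $x^2=x\circ x$ for the Jordan product $x\circ y=\tfrac12(xy+yx)$. Now $C(\Omega)$ has no nonzero element of square zero, so if $x\in A^{**}$ satisfies $x^2=0$ then $J(x)^2=0$, forcing $J(x)=0$ and hence $x=0$. Therefore $A^{**}$ has no nonzero square-zero element. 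Since any nonzero nilpotent would yield a nonzero square-zero element, $A^{**}$ has no nonzero nilpotents, and a C*-algebra with this property is commutative. Hence $A^{**}$, and therefore $A$, is abelian.

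The routine points are the identification of $A^{**}$ as a unital C*-algebra and the elementary fact that a C*-algebra without nonzero nilpotents is abelian (non-abelianness produces, through a two-dimensional irreducible piece, a nonzero $x$ with $x^2=0$). The one genuinely external input --- and the step I regard as the crux --- is Kadison's isometry theorem, which converts the purely Banach-space datum furnished by Lemma~\ref{lem} into algebraic information about the Jordan product; I would also take a moment to confirm that the Hasumi--Kelley representation is available in the \emph{complex} setting, exactly as used for $C(\Omega)$ in Example~\ref{s}, so that $T$ indeed lands in a commutative C*-algebra.
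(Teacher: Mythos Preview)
Your proof is correct and shares the paper's overall architecture: pass to the bidual, use Lemma~\ref{lem} to obtain $1$-injectivity, invoke the Hasumi--Kelley representation $A^{**}\cong C(\Omega)$, and then appeal to Kadison's isometry theorem to convert the Banach-space isometry into Jordan-algebraic information. The only divergence is in the final algebraic step. The paper works with the Jordan \emph{triple} product $\{a,b,c\}=\tfrac12(ab^*c+cb^*a)$, transports a triple identity from $C(\Omega)$ back to $A^{**}$, and computes directly that every projection $p$ satisfies $pa=ap$; commutativity then follows because a von Neumann algebra is generated by its projections. You instead extract a Jordan $*$-isomorphism $J$ from Kadison, note that $J$ preserves squares, deduce that $A^{**}$ has no nonzero square-zero element, and quote the classical fact that such a C*-algebra is abelian. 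Both routes are short; yours has the advantage that the ``no nilpotents $\Rightarrow$ abelian'' step is especially transparent for the von Neumann algebra $A^{**}$ (a non-abelian von Neumann algebra contains a nonzero partial isometry $v$ with $v^*v\perp vv^*$, whence $v^2=0$), while the paper's computation with projections is entirely self-contained and avoids citing that auxiliary result.
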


\begin{proof}
Let $A$ be a $1$-separably injective C*-algebra.
 By Lemma \ref{lem},
the bidual $A^{**}$ is $1$-injective and hence linearly isometric
to a continuous function space $C(\Omega)$ on some stonean
space $\Omega$ \cite{H58}. The linear isometry between the C*-algebras
$A^{**}$ and $C(\Omega)$ preserves the Jordan triple product
$$\{a,b,c\} := \frac{1}{2}(ab^*c + cb^*a) \qquad (a,b,c \in A^{**})$$
by a well-known result of Kadison \cite{k} (see also \cite[Theorem 3.1.7]{book}).
Since $C(\Omega)$ is an abelian algebra, we must have, via the isometry between
$C(\Omega)$ and $A^{**}$,
$$\{a,b,\{c,\mathbf 1, \mathbf 1\}\} = \{a, \{b,c,\mathbf 1\}, \mathbf 1\}$$
for $a,b,c \in A^{**}$, where $\mathbf 1$ denotes the identity in $A^{**}$.
Let $p$ be a projection in $A^{**}$ and $a\in A^{**}$. A simple computation gives
\begin{eqnarray*}
\frac{1}{2} (pa + ap) &=& \{p,p,\{a,\mathbf 1, \mathbf 1\}\} =\{p,\{p,a,\mathbf 1\}, \mathbf 1\} \\
&=& \frac{1}{4}( pa + ap + 2pap)
\end{eqnarray*}
and $pa +ap = 2pap$, which implies $pa=ap$. Hence $A^{**}$ is abelian since it is generated by
projections. In particular,  $A$ itself is abelian.
\end{proof}

We have the following result readily.

\begin{prop}
Let $A$ be a von Neumann algebra. The following conditions are equivalent.
\begin{itemize}
\item[(i)] $A$ is $1$-separably injective. \item[(ii)] $A$ is
$1$-injective.
\end{itemize}
\end{prop}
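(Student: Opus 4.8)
The plan is to prove the two implications separately. The implication (ii) $\Rightarrow$ (i) is immediate from the definitions: a $1$-injective space extends contractions from any subspace of any Banach space, in particular from closed subspaces of separable spaces, and is therefore a fortiori $1$-separably injective.

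For the substantive direction (i) $\Rightarrow$ (ii) I would not reprove any structure theory but instead combine Lemma \ref{lem} with a complementation argument. Assuming $A$ is $1$-separably injective, Lemma \ref{lem} gives that the bidual $A^{**}$ is $1$-injective. The decisive observation is that a von Neumann algebra is a \emph{dual} Banach space: writing $A = (A_*)^*$ for the predual $A_*$, the adjoint of the canonical embedding $A_* \hookrightarrow (A_*)^{**} \cong A^*$ furnishes a norm-one projection $P \colon A^{**} \to A$ that restricts to the identity on $A$. Thus $A$ is a $1$-complemented subspace of the $1$-injective space $A^{**}$.

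The final step is the elementary fact that a norm-one complemented subspace of a $1$-injective Banach space is again $1$-injective: given a contraction $T \colon Y \to A \subseteq A^{**}$ with $Y$ a closed subspace of a Banach space $Z$, one extends $T$ to a contraction $\widetilde T \colon Z \to A^{**}$ using the $1$-injectivity of $A^{**}$, whereupon $P \widetilde T \colon Z \to A$ is the desired contractive extension of $T$, since $P$ fixes $A$ pointwise. Hence $A$ is $1$-injective, completing the cycle.

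The only point requiring care is the construction of the canonical norm-one projection $A^{**} \to A$, which is where the von Neumann (i.e.\ dual space) hypothesis is genuinely used; everything else is routine, which is consistent with the claim that the result follows readily. I expect no serious obstacle here. As an alternative, one could instead invoke Lemma \ref{le} to see that $A$ is abelian, identify $A$ isometrically with $C(\Omega)$ for a hyperstonean, hence stonean, space $\Omega$, and deduce $1$-injectivity from the characterization in \cite{H58,kelley}; but the complementation route is shorter and does not appeal to the commutative structure theory.
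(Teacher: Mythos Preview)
Your proof is correct. The paper, however, takes the alternative route you sketch at the end: it invokes Lemma~\ref{le} to conclude that $A$ is abelian, writes the unital abelian algebra $A$ as $C(\Omega)$, observes that the existence of a predual forces $\Omega$ to be hyperstonean (hence stonean), and then quotes \cite{H58} for the $1$-injectivity of $C(\Omega)$.

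Your main argument---use Lemma~\ref{lem} to make $A^{**}$ $1$-injective, then exploit the canonical norm-one projection $A^{**}\to A$ coming from the predual---is genuinely different and arguably cleaner: it never touches the Gelfand representation or the hyperstonean/stonean hierarchy, and it isolates precisely where the von Neumann hypothesis enters (duality, hence $1$-complementation in the bidual). The paper's route, by contrast, leans on the commutative structure theory and the classical $C(\Omega)$ characterisation of $1$-injectivity; it is more concrete but less portable. Both arguments ultimately rest on Lemma~\ref{lem} (either directly, as you do, or through Lemma~\ref{le}, which itself invokes Lemma~\ref{lem}), so neither avoids the Lindenstrauss-type input.
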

\begin{proof}
(i) $\Rightarrow$ (ii). By the above observation, the unital algebra $A$ is abelian and
hence linearly isometric
to a continuous function space $C(\Omega)$ on some compact
Hausdorff space $\Omega$. Since $A$ has a predual, $\Omega$ must be hyperstonean and therefore $C(\Omega)$
is $1$-injective  by \cite{H58}.

\end{proof}

\begin{remark} The above proposition is false for unital C*-algebras.
Indeed, let $\beta \mathbb{N}$ be the Stone-\v{C}ech
compactification of $\mathbb{N}$. Then $\beta \mathbb{N}\backslash
\mathbb{N}$ is a compact F-space by \cite[p.\,210]{GJ}. Hence the
C*-algebra $C(\beta \mathbb{N}\backslash \mathbb{N})$ is $1$-separably
injective (cf.\,Example \ref{s}), but not $1$-injective since $\beta
\mathbb{N}\backslash \mathbb{N}$ is not stonean \cite[p.\,98]{GJ}.
\end{remark}

We now determine the class of $1$-separably injective
C*-algebras. A useful fact noted in \cite[Proposition 1.1]{GP} is
that a locally compact Hausdorff space $S$ is substonean if, and
only if, the following condition holds: given $f$ and $g$ in
$C_0(S)$ satisfying $fg = 0$, there are functions $f_1, g_1 \in
C_0(S)$ such that $f_1g_1=0$, $f_1 f = f$ and $g_1 g = g$. We will
need the following definition introduced in \cite{HW89}.

\begin{definition}
A nonempty subset $S_0$ of a topological space $S$ is called a
\textit{P-set} if it is closed and any $G_\delta$-set containing
$S_0$ is a neighborhood of $S_0$.  A point $p\in S$ is called a
\textit{P-point} if $\{p\}$ is P-set in $S$.
\end{definition}

It has been remarked in \cite{HW89} that a nonempty subspace $S_0$
of $S$ is a P-set if and only if each real continuous function $f$
on $S$, vanishing on $S_0$, must vanish on a neighborhood of
$S_0$. If $S$ is a locally compact and noncompact space, then $S$
is substonean if, and only if, the one-point compactification $S\cup
\{\infty\}$ is an F-space and $\infty$ is a P-point in $S\cup
\{\infty\}$ (cf.\,\cite[Theorem 1]{HW89}).

\begin{thrm}\label{111}
Let $A$ be a C*-algebra. The following conditions are equivalent.
\begin{itemize}
\item[(i)] $A$ is $1$-separably injective. \item[(ii)] $A$ is linearly
isometric to the Banach space $C_0(S)$ of complex continuous
functions vanishing at infinity on a substonean locally compact
Hausdorff space $S$.
\end{itemize}
\end{thrm}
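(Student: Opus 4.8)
The plan is to reduce everything to abelian C*-algebras and then to a topological statement about $S$. By Lemma \ref{le} a $1$-separably injective C*-algebra $A$ is abelian, so by Gelfand duality it is isometric to $C_0(S)$ for a locally compact Hausdorff space $S$; conversely (ii) already presents $A$ in this form. Hence both implications follow once I show, for an arbitrary locally compact Hausdorff $S$, that $C_0(S)$ is $1$-separably injective if and only if $S$ is substonean. I would treat the two directions by different devices: for the forward direction the function-theoretic reformulation of substoneanness from \cite[Proposition 1.1]{GP} recalled above, and for the converse the reduction of substonean spaces to compact F-spaces via the one-point compactification and \cite[Theorem 1]{HW89}, together with Example \ref{s}.

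For the implication that $C_0(S)$ $1$-separably injective forces $S$ substonean, I would verify the criterion of \cite[Proposition 1.1]{GP}: given $f,g\in C_0(S)$ with $fg=0$, produce $f_1,g_1\in C_0(S)$ with $f_1g_1=0$, $f_1f=f$ and $g_1g=g$. Replacing $f,g$ by $|f|,|g|$ I may assume $f,g\ge 0$; the idea is to manufacture a single real $k\in C_0(S)$ with $\|k\|_\infty\le 1$, $k\equiv 1$ on the cozero set of $f$ and $k\equiv -1$ on that of $g$, after which $f_1=k^{+}$ and $g_1=k^{-}$ do the job. To get $k$ I would apply $1$-separable injectivity to the inclusion $c_0\subset c$ of separable spaces: setting $a_n=\min(nf,1)$, $b_n=\min(ng,1)$ (with $a_0=b_0=0$), $u_n=a_n-a_{n-1}$ and $v_n=b_n-b_{n-1}$, the assignment $T(e_n)=u_n-v_n$ defines a contraction $T\colon c_0\to C_0(S)$, since on the cozero set of $f$ the $u_n$ are nonnegative with partial sums at most $1$ and $\sum_n u_n=1$ (symmetrically for $g$), while the supports of $f$ and $g$ are disjoint. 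Extending $T$ to a contraction $\widetilde T\colon c\to C_0(S)$ and putting $k=\widetilde T(\mathbf 1)$ with $\mathbf 1=(1,1,\dots)$, a computation in $c^{*}\cong \ell^{1}(\mathbb N\cup\{\infty\})$ shows that for $s$ in the cozero set of $f$ the functional $\delta_s\circ\widetilde T$ has $\ell^{1}$-mass $\sum_n u_n(s)=1$ coming entirely from the coordinates $e_n$, which forces the coefficient of the limit functional to vanish and hence $k(s)=1$; symmetrically $k(s)=-1$ on the cozero set of $g$, and $\|k\|_\infty\le 1$ because $\|\widetilde T\|\le 1$.

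For the converse, that $S$ substonean implies $C_0(S)$ $1$-separably injective, I would split into cases. If $S$ is compact it is a compact F-space, so $C_0(S)=C(S)$ is $1$-separably injective by Example \ref{s}. If $S$ is noncompact, then by \cite[Theorem 1]{HW89} the one-point compactification $\widetilde S=S\cup\{\infty\}$ is a compact F-space in which $\infty$ is a P-point, whence $C(\widetilde S)$ is $1$-separably injective by Example \ref{s}. Given a contraction $T\colon Y\to C_0(S)\subseteq C(\widetilde S)$ with $Y$ a closed subspace of a separable space $Z$, I first extend it to a contraction $\widehat T\colon Z\to C(\widetilde S)$. The defect functional $\lambda(z)=\widehat T(z)(\infty)$ lies in the unit ball of $Z^{*}$ and vanishes on $Y$, and for each $z$ the function $\widehat T(z)-\lambda(z)\mathbf 1$ vanishes at the P-point $\infty$, hence vanishes on a neighbourhood of $\infty$. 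Taking a countable dense set $(z_k)$ in $Z$, the intersection of the corresponding open neighbourhoods is a $G_\delta$ containing $\infty$, so by the P-point property it is again a neighbourhood $U$ of $\infty$, and by density $\widehat T(z)\equiv\lambda(z)$ on $U$ for every $z\in Z$. Choosing (Urysohn, using regularity of $\widetilde S$) a function $e\in C(\widetilde S)$ with $0\le e\le 1$, $e\equiv 1$ near $\infty$ and $\mathrm{supp}\,e\subseteq U$, the operator $\widetilde T(z)=\widehat T(z)-\lambda(z)e$ maps into $C_0(S)$, extends $T$ (as $\lambda|_Y=0$), and is again a contraction: off $U$ it coincides with $\widehat T$, while on $U$ it equals $\lambda(z)(1-e)$, of modulus at most $|\lambda(z)|\le\|z\|$.

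The routine parts are the norm estimate for $T$ on $c_0$, the identification of $c^{*}$, and the Urysohn details on $\widetilde S$; the step I expect to be decisive is the last one of the converse. Merely extending into $C(\widetilde S)$ and subtracting the constant $\lambda(z)\mathbf 1$ would only yield the bound $2$ — which is precisely why $c_0=C_0(\mathbb N)$ is $2$- but not $1$-separably injective — so the full weight of the substonean hypothesis is carried by the P-point property, which upgrades the correction term from the constant $\mathbf 1$ to a function $e$ supported near $\infty$ and thereby preserves the norm exactly.
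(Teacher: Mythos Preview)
Your proof is correct in both directions, with one small patch needed, and differs from the paper's in interesting ways.

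For (i)$\Rightarrow$(ii) the paper also reduces to the Grove--Pedersen criterion but produces the separating function differently: it lets $Y$ be the closed span of $\{f^{1/n},g^{1/n}\}$ inside $C_0(S)$, adjoins the single vector $h=\chi_{coz(f)}\in\ell^\infty(S)$ to form $Z=Y+\mathbb{C}h$, and extends the inclusion $Y\hookrightarrow C_0(S)$ to $Z$; the image $k=\tilde\iota(h)$ is then forced, via the norm inequalities $\|h-2f^{1/n}\|\le 1$ and $\|h+e^{i\theta}g^{1/n}\|\le 1$, to satisfy $k\equiv 1$ on $coz(f)$ and $k\equiv 0$ on $coz(g)$. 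Your route via the fixed separable pair $c_0\subset c$ and the $\ell^1$-duality argument is equally valid and arguably more conceptual: it isolates exactly why the extension must be rigid on the cozero sets (the $\ell^1$-mass is already exhausted by the $e_n$-coefficients). One small gap: $k=\widetilde T(\mathbf 1)$ is a priori complex, so $k^{\pm}$ are undefined; take $\mathrm{Re}\,k$ first, which retains $\|\mathrm{Re}\,k\|\le 1$ and the values $\pm 1$ on the two cozero sets.

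For (ii)$\Rightarrow$(i) the two arguments are dual to one another. The paper takes a countable dense set in $T(Y)\subset C_0(S)$ (rather than in $Z$), uses the P-point property to find a common neighbourhood $U$ of $\infty$ on which all these functions vanish, then picks $e\in C_0(S)$ with $e\equiv 1$ on $S\setminus U$ and $e(\infty)=0$, and sets $T_e(z)=\widetilde T(z)\,e$. Your version instead shows $\widehat T(z)\equiv\lambda(z)$ on $U$ for \emph{all} $z$ and subtracts $\lambda(z)e$ with $e$ supported in $U$ and $e(\infty)=1$. Both exploit the P-point property in the same way; the paper's multiplicative correction avoids tracking the defect functional $\lambda$, while your subtractive correction makes transparent why the naive fix $\widehat T(z)-\lambda(z)\mathbf 1$ loses a factor $2$ and how the P-point hypothesis restores it.
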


\begin{proof}
(i) $\Rightarrow$ (ii). By Lemma \ref{le}, $A$ is abelian and hence
linearly isometric to the function space $C_0(S)$ on some locally compact Hausdorff space
$S$.  We show that $S$ is substonean.

Given any function $f\in C_0(S)$, we define the {\it cozero set}
of $f$ to be the set $coz(f) = \{x\in S: f(x) \neq 0\}$. Let $U$
and $V$ be two disjoint open $\sigma$-compact sets in $S$. We show
that they have disjoint compact closures. It has been observed in
\cite[p.125]{GP} that one can find, via Urysohn's lemma, two
functions $f,g\in C_0(S)$ with $0\leq f,g\leq 1$ such that $U =
coz(f)$ and $V = coz(g)$. We note that $fg=0$ in this case.

Let $h=\chi_{coz(f)}$ be the characteristic function of $coz(f)$.
Let $Y$ be the closed linear span of $\{f^{1/n}, g^{1/n}: n = 1,2,
\ldots .\}$ in $C_0(S)$, and let $Z = Y +\mathbb{C}\,h \subset
\ell^\infty(S)$. Since $C_0(S)$ is $1$-separably injective,  the
identity map $\iota: Y\to C_0(S)$ admits a norm preserving
extension $\tilde{\iota}: Z\to C_0(S)$. Write
$k=\tilde{\iota}(h)\in C_0(S)$ and note that $\|k\| \leq 1$.

For each $n\in \mathbb{N}$, we have $\|h-2f^{\frac{1}{n}}\|\leq 1$
and therefore
$\|k-2f^{\frac{1}{n}}\|=\|\tilde{\iota}(h)-2\tilde{\iota}(f^{\frac{1}{n}})\|\leq
1$. In particular, $|k(x)-2f^{\frac{1}{n}}(x)|\leq 1$ for all
$n\in \mathbb{N}$ which, together with $|k(x)| \leq 1$, implies
$k(x)=1$ for $x\in coz(f)$. It follows that  $kf=f$.

Since $hg=0$, we have $\|h+e^{i\theta}g^{\frac{1}{n}}\|\leq 1$ for
every $\theta\in [0, 2\pi)$ which gives
$\|k+e^{i\theta}g^{\frac{1}{n}}\|=\|\tilde{\iota}(h)+
\tilde{\iota}(e^{i\theta}g^{\frac{1}{n}})\|\leq 1$. Hence for each
$x\in coz(g)$, one has $|k(x)+e^{i\theta} g^{\frac{1}{n}}(x)|\leq
1$ for all $n\in\mathbb{N}$ and $\theta\in [0, 2\pi)$. This
implies $k(x)=0$ for $x \in coz (g)$ and therefore $kg=0$.

We have $coz(f)\subset \{x\in S: k(x)=1\}$ and $coz(g)\subset
\{x\in S: k(x)=0\}$, where $\{x\in S: k(x)=1\}$ is compact and
contained in the open set $\{x\in S: |k(x)|>\frac{3}{4}\}$.
Applying Urysohn's lemma to the compact set
$\overline{coz(f)}\subset \{x\in S: k(x)=1\}$, one can find a
function $f_1\in C_0(S)$ such that $0\leq f_1\leq 1$, $f_1=1$ on
$\overline{coz(f)}$ and $f_1=0$ outside $\{x\in S:
|k(x)|>\frac{3}{4}\}$.

Considering the  function $h'=\chi_{coz(g)}$ with similar
arguments, we can find another function $k'\in C_0(S)$ with
$\|k'\| \leq 1$ such that $coz(g)\subset \{x\in S: k'(x)=1\}$ and
hence $\overline{coz(g)}$ is a compact subset of $S$. Since
$\{x\in S: |k(x)|<\frac{1}{4}\}$ is  an open subset of $S$
containing $\overline{coz(g)}$, Urysohn's lemma again yields a
function $g_1\in C_0(S)$ such that $0\leq g_1\leq 1$, $g_1=1$ on
$\overline{coz(g)}$ and $g_1=0$ outside $\{x\in S:
|k(x)|<\frac{1}{4}\}$. It follows that $f_1g_1=0$ and $f_1f=f,
g_1g=g$.

It follows that the closures $\overline{U}=\overline{coz(f)}
\subset \{x\in S: f_1(x) \geq 1/2\}$ and
$\overline{V}=\overline{coz(g)} \subset \{x\in S: g_1(x) \geq
1/2\}$ are compact and disjoint.

(ii) $\Rightarrow$ (i). Let $A$ be linearly isometric to  $C_0(S)$
where $S$ is a substonean locally compact Hausdorff space. We show
that $C_0(S)$ is separably injective. This is true if $S$ is
compact, as shown in Example \ref{s}. Let $S$ be noncompact. Since
the one-point compactification $S\cup\{\infty\}$ is an $F$-space,
$C(S\cup\{\infty\})$ is $1$-separably injective, again by Example
\ref{s}.

Let $Y$ be a closed subspace of a separable Banach space $Z$ and
let $T: Y\to C_0(S)$ be a bounded linear operator. We identify
$C_0(S)$ with the closed subspace
$\{u\in C(S \cup\{\infty\}): u(\infty)=0\}$
of  $C(S \cup\{\infty\})$. Let $\widetilde{T}: Z\to C(S\cup
\{\infty\})$ be a norm preserving extension of $T$.

Let $\{f_n: n \in\mathbb{N}\}$ be a countable dense subset  of
$T(Y)\subset C_0(S)$. Since $S$ is substonean, $\infty$ is a
P-point and there exists an open neighbourhood $U_n$ of $\infty$
such that $f_n =0$ on $U_n$.

Moreover, the $G_\delta$-set $\bigcap_n U_n$ contains an open
neighbourhood $U$ of $\infty$. Hence Urysohn's lemma again enables
us to choose a function $e\in C_0(S)$ with $\|e\| =1$ such that
$e=1$ on $S\backslash U$ and $e(\infty)=0$. This gives
 $f_ne=f_n$ for all $n\in\mathbb{N}$.

It can now be seen readily that the linear map ${T}_e: Z\to
C_0(S)$ defined by
\[{T}_e(z)=\widetilde{T}(z)e \qquad (z\in Z).\]
is a norm preserving extension of $T$.
\end{proof}

We conclude by mentioning some interesting examples of substonean
spaces. For any locally compact, $\sigma$-compact Hausdorff space
$S$ with Stone-\v{C}ech compactification $\beta S$, the space
$\beta S \backslash S$ is a compact F-space \cite[14.27]{GJ}.
Locally compact substonean spaces include the {\it Rickart spaces}
which are exactly those locally compact spaces $S$ for which
$C_0(S)$ is monotone sequentially complete \cite{GP}. In
particular, abelian
monotone sequentially complete C*-algebras are $1$-separably injective.

\end{document}